\journal{Applied Numerical Mathematics}
\newtheorem{proposition}{Proposition}
\newtheorem{corollary}{Corollary}
\newtheorem*{lemma*}{Lemma}
\newtheorem{theorem}{Theorem}
\newcommand{\argmin}{\mathop{\text{argmin}}}
\newcommand{\IM}{\mathop{\text{Im}}\nolimits}
\newcommand{\RR}{\mathbb R}
\newcommand{\CC}{\mathbb C}
\renewcommand{\leq}{\leqslant}
\renewcommand{\geq}{\geqslant}
\begin{document}

\begin{frontmatter}

%% Title, authors and addresses

%% use the tnoteref command within \title for footnotes;
%% use the tnotetext command for theassociated footnote;
%% use the fnref command within \author or \address for footnotes;
%% use the fntext command for theassociated footnote;
%% use the corref command within \author for corresponding author footnotes;
%% use the cortext command for theassociated footnote;
%% use the ead command for the email address,
%% and the form \ead[url] for the home page:
%% \title{Title\tnoteref{label1}}
%% \tnotetext[label1]{}
%% \author{Name\corref{cor1}\fnref{label2}}
%% \ead{email address}
%% \ead[url]{home page}
%% \fntext[label2]{}
%% \cortext[cor1]{}
%% \address{Address\fnref{label3}}
%% \fntext[label3]{}

%\title{Explicit Adams-type methods with extended stability interval}
\title{Stabilized explicit Adams-type methods}
\author{Vasily Repnikov}
\ead{repnikov@bsu.by}
\author{Boris Faleichik}
\ead{faleichik@bsu.by}
\author{Andrey Moysa}
\ead{moysa@bsu.by}

\address[bsu]{Department of Computational Mathematics, Belarusian State University, 4,~Nezavisimosti~avenue, 220030, Minsk, Belarus}
%% use optional labels to link authors explicitly to addresses:
%% \author[label1,label2]{}
%% \address[label1]{}
%% \address[label2]{}

\author{}

\address{}

\begin{abstract}
%% Text of abstract
In this work we present explicit Adams-type multistep methods with extended stability interval, which are analogous to the stabilized Chebyshev Runge--Kutta methods. It is proved that for any $k\geq 1$ there exists an explicit $k$-step Adams-type method of order one with stability interval of length $2k$. The first order methods have remarkably simple expressions for their coefficients and error constant. A damped modification of these  methods is derived. In general case to construct a $k$-step method of order $p$ it is necessary to solve a constrained optimization problem in which the objective function and $p$ constraints are second degree polynomials in $k$ variables. 
We calculate higher-order methods up to order six numerically and perform some numerical experiments to confirm the accuracy and stability of the methods.

\end{abstract}

%%Graphical abstract
% \begin{graphicalabstract}
% %\includegraphics{grabs}
% \end{graphicalabstract}

%%Research highlights
%\begin{highlights}
%\item Research highlight 1
%\item Research highlight 2
%\end{highlights}

\begin{keyword}
%% keywords here, in the form: keyword \sep keyword
numerical ODE solution \sep
stiffness \sep
stability interval \sep
absolute stability \sep
multistep methods \sep
Adams-type methods \sep
explicit methods

\MSC 65L04 \sep 65L05 \sep 65L06

%% PACS codes here, in the form: \PACS code \sep code

%% MSC codes here, in the form: \MSC code \sep code
%% or \MSC[2008] code \sep code (2000 is the default)

\end{keyword}

\end{frontmatter}

%% \linenumbers

%% main text
\section*{Introduction}
\label{s:intro}

A $k$-step  Adams method for the numerical integration of ODE system
\begin{equation}
  y'=f(t,y),\quad y(t_0)=y_0,
  \quad y:\RR\to\RR^n, \quad f:\RR\times \RR^n\to \RR^n,
\end{equation}
on uniform grid has the form
\begin{equation}
\label{eq:adams}
  y_{m+k}=y_{m+k-1}+\tau (\beta_0 f_m+\ldots+\beta_{k-1} f_{m+k-1}),
\end{equation}
where $m$ is the step number, ${y_j\approx y(t_0+j\tau)}$, ${f_j=f(t_0+j\tau , y_j)}$, $\tau$ is a discretization step, $\{\beta_j\}$ are the method coefficients.

A conventional means of analyzing the linear stability of a multistep method is construction of stability region $S\subset \CC$ such that for all $\lambda\tau \in S$ the numerical solution of the model linear problem
\[
 y'=\lambda y,\quad \lambda \in\CC,
\]
remains bounded for all $m$. The equivalent requirement is that all roots $\zeta_j$ of the characteristic equation   
\begin{equation}\label{eq:chareq}
	\rho(\zeta)-\lambda\tau \sigma(\zeta)=0
\end{equation}
lie within the unit disc on the complex plane, and all the roots of modulus one are simple \cite{HW}. Here $\rho$ and $\sigma$ are the standard generating polynomials which in our case have the form 
\begin{equation}
	\rho(\zeta)=\zeta^k-\zeta^{k-1},\quad
	\sigma(\zeta)=\sum_{j=0}^{k-1}\beta_j \zeta^j.
\end{equation}
The stability interval of a method is the largest interval of the form $[-\ell,0]$ contained in $S$. Here $\ell \geq 0$ is the value which will be referred to as the length of stability interval.
As is known, stability intervals of the classical explicit Adams methods are small and get smaller with the growth of $k$, so these methods are not suitable for stiff problems. 
The purpose of the present research is to construct explicit multistep methods of Adams type \eqref{eq:adams} of order $p<k$ with increased lengths of stability interval. Putting it in other words, we develop a multistep analogs of the well-known Chebyshev Runge--Kutta methods \cite{lebedev}, \cite{sommeijer}, \cite{Abdulle}. 
%The topic we should mention papers \cite{Stetter}, \cite{Skvortsov}, \cite{XuZhao}.

The main obstacle on the way of construction of  multistep methods for stiff problems is the dependence of error constant on the size of stability region, which was investigated by Jeltsch and Nevanlinna, \cite{Jeltsch1981}, \cite{Jeltsch1982}. On the one hand, due to \cite[Theorem 4.2]{Jeltsch1981}, for any $k>1$, $\alpha<\pi/2$ and $R>0$ there exists an explicit linear multistep method of order $k-1$ such that the method is stable in the set
\begin{equation*}
  	\{\mu\in\CC\::\: |\mu|\leq R, \: |\arg(-\mu)|\leq \alpha\}.
\end{equation*}  
Unfortunately, methods which stability regions contain large disks of the form $\{\mu\in\CC \::\: |\mu+R|\leq R\}$ are useless in practice due to huge error constants (norms of Peano kernels) \cite[Theorem 4.1]{Jeltsch1982}, \cite[V.2, Theorem 2.6]{HW}. On the other hand, in case of long stability intervals the lower bounds for error constant are less restrictive. Namely, \cite[Theorem 4.4]{Jeltsch1982} gives a lower bound for Peano kernel norms for explicit multistep methods in the form of $C_k \ell$, where ${C_k>0}$. Another interesting fact from \cite[Theorem 4.7]{Jeltsch1982} is that for explicit $k$-step methods of order $k-1$ with $\ell>2$ the error constant has lower bound equal to $\delta_{k-1}(\ell-2)/2^{k-1}$, where $(\delta_k)$ is a decreasing sequence with $\delta_1=0.5$ . Thus we hope that explicit multistep methods with extended stability interval can have reasonable error constants (see Table \ref{tab:errconst}).

The material is organized as follows. In Section \ref{s:strategy} we describe the general framework of the methods construction, Sections \ref{s:order1} and \ref{s:order1-damp} are devoted to the first order methods and their damped modifications. Higher order methods are discussed in Section \ref{s:high-order}, Section \ref{s:num-exp} contains the results of numerical experiments. In the last section we discuss the obtained results and make final conclusions. 

\section{Optimization strategy}\label{s:strategy}

The conventional way of stability region construction is to find a root locus curve $\mathcal C$ defined as
\begin{equation}\label{eq:locus}
	\mathcal C=\{\mu_\beta(e^{i\varphi})\::\: \varphi\in[0,2\pi)\},
\end{equation}
where the function $\mu_\beta:\CC\to\CC$ maps a root of the characteristic equation \eqref{eq:chareq} to the corresponding value of $\lambda\tau$:
\begin{align}\label{eq:mu}
	\mu_\beta(\zeta)
	=\frac{\rho(\zeta)}{\sigma(\zeta)}.
	%=\frac{\rho(u)\overline{\sigma(u)}}{|\sigma(u)|^2}
\end{align}
The subscript $\beta$ indicates the dependence on the coefficients of method \eqref{eq:adams} which we are to determine.
From the definition of stability region it follows that
\(
	\partial S \subseteq \mathcal C
\)
and 
\begin{equation}
	\ell\leq -\mu_\beta(-1),
\end{equation}
thus the optimization problem can be stated as
\begin{equation}\label{eq:argmin}
	\beta^*=\argmin_{\beta\in\mathcal F\cap\mathcal P} \mu_\beta(-1),
%	\beta^*=\underset{\beta\in\mathcal F}{\argmin} \mu_\beta(-1)
\end{equation}
where $\mathcal P$ is a set of coefficients which satisfy the posed order conditions, and $\mathcal F$ is a feasible set of coefficients with a desired shape of the root locus curve. This set is defined as follows.

Primarily we'd like to have $\ell=-\mu_\beta(-1)$ for all ${\beta\in\mathcal F}$. To assure this we require the locus curve \eqref{eq:locus} not to cross the real axis before the parameter $\varphi$ reaches~$\pi$. This condition triggers the following definition for the feasible set:
\begin{equation}\label{eq:feasible}
	\mathcal F=\bigl\{\beta\in\RR^k \::\:
	\IM \mu_\beta(e^{i\varphi})\geq 0 \quad \forall \varphi\in(0,\pi)
	\bigr\}.
\end{equation}

The main question now is how to find a parametization of $\mathcal F$ which allows for reducing \eqref{eq:argmin}, \eqref{eq:feasible} to some handleable form. We start from noting that 
\begin{equation}
	\IM \mu_\beta(e^{i\varphi})\geq 0 
	\quad \Leftrightarrow\quad
	\nu_\beta(\varphi)\geq 0,
\end{equation}
where
\begin{equation}\label{eq:nu}
	\nu_\beta(\varphi)=\IM \rho(e^{i\varphi})\overline{\sigma(e^{i\varphi})}=\sum_{j=1}^k (\beta_{k-j}-\beta_{k-j-1})\sin j \varphi.
\end{equation}
Here and further we set $\beta_j=0$ for all $j<0$ and $j>k-1$. By utilizing the Chebyshev polynomials of the second kind $U_j$,  
\begin{equation}
	U_{j-1}(\cos \varphi)\sin\varphi=\sin j \varphi,
\end{equation}
and using the power reduction formulae for the powers of $\cos \varphi$, \eqref{eq:nu} can be represented as
\begin{equation}\label{eq:nu2}
 	\nu_\beta(\varphi)=\sin\varphi \sum_{j=0}^{k-1}a_j \cos j \varphi
 \end{equation} 
 with some ${a_j\in\RR}$.
Since we need \(\nu_\beta(\varphi)\) to be nonnegative on $(0,\pi)$, the following result from \cite[Lemma 6.1.3]{Daubechies} comes in handy.
\begin{lemma*}
	For any nonnegative trigonometric polynomial $A$ of the form
	\begin{equation*}
		A(\varphi)=\sum_{j=0}^k a_j \cos j\varphi, \quad a_j\in \RR,
	\end{equation*}
	there exists a trigonometric polynomial 
	\begin{equation*}
		B(\varphi)=\sum_{j=0}^k b_j e^{i j \varphi}, \quad b_j \in \RR,
	\end{equation*}
	such that $A(\varphi)=|B(\varphi)|^2$.
\end{lemma*}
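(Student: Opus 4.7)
The plan is to reduce the statement to a factorization problem for an algebraic polynomial on the unit circle, i.e.\ to a version of the Fej\'er--Riesz theorem with real coefficients. Setting $z=e^{i\varphi}$ and using $2\cos j\varphi = z^j + z^{-j}$, I would first rewrite
\[
  A(\varphi)=\frac{1}{2z^{k}}\Bigl(2a_0 z^{k}+\sum_{j=1}^{k} a_j(z^{k+j}+z^{k-j})\Bigr)
  =\frac{Q(z)}{2z^{k}},
\]
where $Q$ is a polynomial of degree $2k$ with real coefficients satisfying the self-reciprocal identity $z^{2k}Q(1/z)=Q(z)$. The assumed nonnegativity of $A$ on $[0,2\pi)$ translates into $Q(z)\geq 0$ for every $z$ on the unit circle.

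Next I would analyze the root structure of $Q$. The self-reciprocal identity pairs roots as $\{r,1/r\}$, while real coefficients pair them as $\{r,\bar r\}$. Combining both symmetries, the zero set of $Q$ splits into (a)~quadruples $\{r,\bar r,1/r,1/\bar r\}$ when $r$ is neither real nor on the unit circle; (b)~reciprocal pairs $\{r,1/r\}$ when $r$ is real and $|r|\neq 1$; (c)~conjugate pairs $\{r,\bar r\}$ when $|r|=1$ and $r\notin\mathbb{R}$; (d)~individual real roots at $\pm 1$. Moreover, the nonnegativity of $Q$ on the unit circle forces every root lying on the circle to have \emph{even} multiplicity, since a simple (or odd-order) zero would be a sign change.

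I would then construct the candidate $B$ of degree $k$ by selecting a ``half'' of the roots of $Q$: from each off-circle quadruple take the conjugate pair $\{r,\bar r\}$ (discarding $\{1/r,1/\bar r\}$); from each off-circle real reciprocal pair take one of $r,1/r$; from each on-circle conjugate pair of multiplicity $2m$ assign multiplicity $m$ to both $r$ and $\bar r$; and for roots at $\pm 1$ of multiplicity $2m$ assign multiplicity $m$. By construction the selected roots form a conjugation-invariant multiset of total size $k$, so the monic polynomial they define has real coefficients; after multiplying by a suitable real constant (whose nonnegativity is guaranteed by $A\geq 0$) one obtains $B(z)=\sum_{j=0}^{k}b_j z^{j}$ with $b_j\in\mathbb{R}$. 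It then remains to compute
\[
  |B(e^{i\varphi})|^{2}=B(e^{i\varphi})\,\overline{B(e^{i\varphi})}=B(z)B(1/z)\Big|_{z=e^{i\varphi}},
\]
using that $B$ has real coefficients, and observe that $z^{k}B(z)B(1/z)$ is a self-reciprocal polynomial of degree $2k$ with the same zeros (counted with multiplicity) as $Q$; matching the leading scalar yields $Q(z)=z^{k}B(z)B(1/z)$, hence $A(\varphi)=|B(\varphi)|^{2}$.

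The principal obstacle is forcing $B$ to have \emph{real} rather than merely complex coefficients: this is where the selection must respect the full quadruple/conjugate-pair structure rather than just the reciprocal pairing used in the classical Fej\'er--Riesz theorem. The evenness of on-circle multiplicities is the other delicate point, without which the ``half'' assignment above would not yield a genuine polynomial.
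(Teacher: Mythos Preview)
The paper does not give its own proof of this lemma; it is simply quoted as a known result from Daubechies' book (Lemma~6.1.3 there). So there is no in-paper argument to compare yours against.

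Your argument is the standard Fej\'er--Riesz proof adapted to the real-coefficient situation, and it is essentially correct. One small inaccuracy: the statement ``$Q(z)\geq 0$ for every $z$ on the unit circle'' is not literally true, since $Q(e^{i\varphi})$ is in general complex. What the nonnegativity of $A$ gives you is that the \emph{real} function $Q(z)/z^{k}=2A(\varphi)$ is $\geq 0$ on $|z|=1$ (its reality follows from the self-reciprocal identity together with the fact that $Q$ has real coefficients). This is exactly what you need for the even-multiplicity argument, so the remainder of the proof---the grouping of roots into quadruples, real reciprocal pairs, on-circle conjugate pairs, and $\pm 1$, followed by selecting a conjugation-closed half---goes through unchanged. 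It would also be worth stating explicitly that one may assume $a_k\neq 0$ (otherwise apply the result for a smaller $k$), so that $Q$ has degree exactly $2k$ and no root at the origin, which keeps the root count clean.
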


From this lemma it follows that all feasible trigonometric polynomials $\nu_\beta$ have the form
\begin{equation}\label{eq:nu3}
	\nu_\beta(\varphi)=\sin\varphi\left|\sum_{j=0}^{k-1}b_j e^{i j \varphi}\right|^2=\sin\varphi\sum_{j,l=0}^{k-1}b_jb_l \cos(j-l)\varphi
\end{equation}
with $b_j\in\RR$. To complete the transformation of the optimization problem we must express the original coefficients $\{\beta_j\}$ in terms of $\{b_j\}$. This can be done by converting \eqref{eq:nu2} to the same basis of $\sin j\varphi$ as \eqref{eq:nu}:
\begin{equation*}
	\sin\varphi \sum_{j=0}^{k-1}a_j \cos j \varphi=\frac12\sum_{j=0}^{k-1}a_j (\sin(j+1)\varphi - \sin(j-1)\varphi).
\end{equation*}
By equating this expression with \eqref{eq:nu} it is straightforward to get

\begin{subequations}\label{eq:b-beta}
\begin{align}
	&\beta_j=\frac12(\tilde a_{j-1}+\tilde a_j),\quad j=0,1,\ldots, k-2,\\
	&\beta_{k-1}=\tilde a_{k-1}+\frac{\tilde a_{k-2}}{2}.
\end{align}
Here for clarity $\tilde a_j=a_{k-1-j}$, $\tilde a_{-1}=0$. On the other hand, from \eqref{eq:nu2}, \eqref{eq:nu3} we have

\begin{align}
	&\tilde a_j=2\sum_{l=0}^j b_l b_{k-1+l-j},\quad j=0,1,\ldots,k-2,\\
	&\tilde a_{k-1}=\sum_{j=0}^{k-1}b_j^2.
\end{align}
\end{subequations}
Transformations \eqref{eq:b-beta} define the required mapping \[T:b\mapsto \beta,\] where $b=(b_0,\ldots,b_{k-1})$, $\beta=(\beta_0,\ldots,\beta_{k-1})$.

Now let us derive the form of objective function \eqref{eq:argmin},
\begin{equation}\label{eq:mu-1}
 	\mu_\beta(-1)=\frac{2 (-1)^k}{\sum_{j=0}^{k-1} (-1)^j\beta_j},
 \end{equation} 
in terms of $b$. 
By direct application of \eqref{eq:b-beta} we have
\begin{equation}
 	\sum_{j=0}^{k-1} (-1)^j\beta_j=(-1)^k \sum_{j=0}^{k-1}b_j^2,
\end{equation} 
so the initial optimization problem \eqref{eq:argmin}, \eqref{eq:feasible} finally takes the surprisingly simple form
\begin{subequations}\label{eq:opt-b}
\begin{equation}\label{eq:b-argmin}
	b^*=\argmin_{b\in \mathcal P'_p} \sum_{j=0}^{k-1}b_j^2,
\end{equation}
where $\mathcal P'_p$ is the order $p$ restriction set:
\begin{align}
	\label{eq:b-set}
	&\mathcal P'_p=\{b\in\RR^k \::\: G_q(T(b))=0,\quad q=1,2,\ldots,p\},\\
	\label{eq:b-order1}
	&G_1(\beta)=\sum_{j=0}^{k-1}\beta_j-1,\\
	\label{eq:b-orderp}
	&G_q(\beta)=\sum_{j=0}^{k-1}(1-k+j)^{q-1}\beta_j-\frac{1}{q},\quad q>1.
\end{align}
\end{subequations}

%%%%%%%%%%%%%%%%%%%%%%%%%%%%%%%%%%%%%%%%%%%%%%%%%%%%%%%%%%%%%%%%%%%%%%%
\section{First order methods}\label{s:order1}
\begin{theorem}
	For any number of steps $k\geq 1$ there exists a first-order explicit Adams-type method with stability interval of length $2k$. The method has the form
	\begin{equation}\label{eq:order1-opt}
		y_{m+k}=y_{k+m-1}+ \frac{\tau}{k^2}\Bigl(f_{k+1}+3 f_{k+1}+\ldots+(2k-1) f_{k-1}\Bigr),
	\end{equation}
	i.~e. $\beta_j=(2j+1)/k^2$. 
\end{theorem}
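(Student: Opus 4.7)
The plan is to apply the optimization framework of Section~\ref{s:strategy} with $p=1$ and exhibit the unique minimizer of \eqref{eq:opt-b} in closed form; pushing it through the map $T$ will give the stated coefficients $\beta_j=(2j+1)/k^2$, and the stability length $2k$ will follow from the optimal value of the objective via \eqref{eq:mu-1}.

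The first step is to translate the single order condition $G_1(T(b))=0$ into the $b$-coordinates. Summing the relations \eqref{eq:b-beta} and rearranging (the sums $\tfrac12\sum \tilde a_{j-1}$ and $\tfrac12\sum \tilde a_j$ combine with $\tilde a_{k-1}+\tfrac12 \tilde a_{k-2}$ to collapse) yields $\sum_{j=0}^{k-1}\beta_j=\sum_{j=0}^{k-1}\tilde a_j=\sum_{j=0}^{k-1} a_j$. On the other hand, the identity $\sum_{j=0}^{k-1}a_j\cos j\varphi=|B(\varphi)|^2$, implicit in the equality between \eqref{eq:nu2} and \eqref{eq:nu3}, evaluated at $\varphi=0$ gives $\sum_j a_j=\bigl(\sum_j b_j\bigr)^2$. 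Hence $\mathcal P'_1=\bigl\{b\in\RR^k:\sum_{j=0}^{k-1}b_j=\pm 1\bigr\}$.

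Second, I would minimize $\sum_j b_j^2$ on this set. Cauchy--Schwarz gives $1=\bigl(\sum_j b_j\bigr)^2\leq k\sum_j b_j^2$, with equality iff all $b_j$ are equal. Taking $\sum b_j=1$ produces the unique minimizer $b_j^*=1/k$ for $j=0,\ldots,k-1$, with optimal value $\sum_j b_j^{*2}=1/k$.

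Finally, substituting $b_j^*=1/k$ into \eqref{eq:b-beta} gives $\tilde a_j=2(j+1)/k^2$ for $0\leq j\leq k-2$ and $\tilde a_{k-1}=k/k^2=1/k$. Checking the three cases $j=0$, $1\leq j\leq k-2$ and $j=k-1$ separately then collapses to the uniform formula $\beta_j=(2j+1)/k^2$. The stability length $\ell=2k$ follows from \eqref{eq:mu-1} together with $\sum_j b_j^{*2}=1/k$. The only genuinely delicate part is the bookkeeping in the first step, namely recognizing that the consistency condition is nothing more than $|B(0)|^2=1$; once that is in place, the remainder is one application of Cauchy--Schwarz plus a routine substitution.
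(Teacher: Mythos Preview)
Your proof is correct and follows the same route as the paper's: reduce the first-order constraint to $(\sum_j b_j)^2=1$, minimize $\sum_j b_j^2$ on that set, then push the minimizer $b_j^*=1/k$ through $T$ and read off $\ell$ from \eqref{eq:mu-1}. The only cosmetic differences are that the paper invokes Lagrange multipliers where you use Cauchy--Schwarz, and it computes $\sum_j(-1)^j\beta_j^*$ directly via \eqref{eq:sum-1beta} rather than through the optimal objective value $\sum_j b_j^{*2}=1/k$.
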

\begin{proof}
	Directly applying \eqref{eq:b-beta} to the first order condition \eqref{eq:b-order1} we have
	\begin{equation*}
		\beta_0+\beta_1+\ldots+\beta_{k-1}=a_0+a_1+\ldots+a_{k-1}=1.
	\end{equation*}
	On the other hand, by construction form \eqref{eq:nu2}, \eqref{eq:nu3} we have 
	\begin{equation}
		\sum_{j=0}^{k-1}a_j = \sum_{j,l}^{k-1} b_j b_l=\left(\sum_{j=1}^{k-1}b_j\right)^2.
	\end{equation}
	Thus the optimization problem \eqref{eq:opt-b} in the case of ${p=1}$ takes the form
	\begin{subequations}\label{eq:first-order-problem}
	\begin{align}
		& b^*=\argmin_{b\in \RR^k} (b_0^2+b_1^2+\ldots+b_{k-1}^2),\\
		& \text{subject to}\quad (b_0+b_1+\ldots+b_{k-1})^2=1.
	\end{align}
	\end{subequations}
	Solving this problem by the method of Lagrange multipliers we get
	$b^*_j=k^{-1}$ for all $j$ and then by \eqref{eq:b-beta} obtain 
	\begin{equation*}
		\beta^*=T(b^*)=\left(\frac1{k^2},\frac3{k^2},\ldots,\frac{2k-1}{k^2}\right).
	\end{equation*}
	By construction of the method from \eqref{eq:mu-1} and since
	\begin{equation}\label{eq:sum-1beta}
		\sum_{j=0}^{k-1}(-1)^j \frac{2j+1}{k^2}=(-1)^{k-1}\frac1k,
	\end{equation}
	 we have $\ell=-\mu_{\beta^*}(-1)=2k$.
\end{proof}
There is an interesting parity of the above result with the case of $s$-stage Chebyshev Runge--Kutta methods of order 1, which require $s$ evaluations of $f$ per step and have stability interval equal to $2s^2$ \cite{HW}, \cite{lebedev}. This allows us to suppose that the achieved length of $2k$ is the largest possible for explicit first-order multistep methods. 

\paragraph{Error constant} According to \cite{HNW} we define the error constant of the multistep method as
\begin{equation}
	C=C_{p+1}/\sigma(1), \quad\text{where}\quad
	C_{p+1}=\frac{1}{(p+1)!}\sum_{j=0}^k \left(\alpha_j j^{p+1} - (p+1)\beta_j j^p\right).
\end{equation}
It is easy to calculate this constant in our case.
\begin{proposition}
	The error constant of the optimized first-order methods is equal to
	\begin{equation}
		C=\frac k3 + \frac{1}{6k}.
	\end{equation}
	\begin{proof}
		Since $\beta_j=(2j+1)/k^2$, $\alpha_k=1$, $\alpha_{k-1}=-1$, $\sigma(1)=1$, we have
		\begin{equation}
			C=\frac12\left(k^2-(k-1)^2-\frac{2}{k^2}\sum_{j=1}^{k-1}j(2j+1)\right)=\frac k3 + \frac{1}{6k}.
		\end{equation}
	\end{proof}
\end{proposition}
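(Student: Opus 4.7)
The plan is to evaluate the definition of $C$ directly, substituting the known closed form $\beta_j=(2j+1)/k^2$ into the expression for $C_{p+1}$ with $p=1$. Since the only nonzero $\alpha_j$ are $\alpha_k=1$ and $\alpha_{k-1}=-1$, the $\alpha$-part of $C_2$ collapses to $k^2-(k-1)^2=2k-1$, while the $\beta$-part becomes $-2\sum_{j=0}^{k-1} j \beta_j = -\frac{2}{k^2}\sum_{j=1}^{k-1} j(2j+1)$. The normalization factor $\sigma(1)=\sum_{j=0}^{k-1}\beta_j$ equals $1$ by the first-order condition that $\beta^*$ satisfies by construction, so $C=C_2$.

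Next I would evaluate the sum $\sum_{j=1}^{k-1} j(2j+1) = 2\sum_{j=1}^{k-1} j^2 + \sum_{j=1}^{k-1} j$ using the standard Faulhaber formulas
\begin{equation*}
\sum_{j=1}^{k-1}j=\frac{(k-1)k}{2},\qquad \sum_{j=1}^{k-1}j^2=\frac{(k-1)k(2k-1)}{6},
\end{equation*}
which combine into $\frac{(k-1)k(4k+1)}{6}$. Plugging back in gives
\begin{equation*}
C=\frac{1}{2}\left(2k-1-\frac{(k-1)(4k+1)}{3k}\right)=\frac{2k^{2}+1}{6k}=\frac{k}{3}+\frac{1}{6k},
\end{equation*}
after expanding the numerator $3k(2k-1)-(k-1)(4k+1)=2k^{2}+1$.

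This is essentially a routine calculation once the coefficients $\beta_j^{*}$ from the theorem are in hand, so there is no real obstacle. The only thing to keep straight is the indexing convention: the paper uses $j=0,\ldots,k-1$ for $\beta_j$ matching $f_{m+j}$, and the $\alpha$-polynomial is $\alpha_k=1$, $\alpha_{k-1}=-1$, so both pieces of the sum defining $C_{p+1}$ must be read with the same convention. Given that, the identity $\sigma(1)=1$ removes the denominator and the arithmetic above closes the proof.
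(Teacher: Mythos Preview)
Your proof is correct and follows exactly the same approach as the paper: direct substitution of $\beta_j=(2j+1)/k^2$, $\alpha_k=1$, $\alpha_{k-1}=-1$, $\sigma(1)=1$ into the definition of $C_{p+1}$ with $p=1$. The paper simply states the resulting expression and the final value, whereas you spell out the evaluation of $\sum_{j=1}^{k-1} j(2j+1)$ via Faulhaber's formulas, but this is the same computation.
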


%%%%%%%%%%%%%%%%%%%%%%%%%%%%%%%%%%%%%%%%%%%%%%%%%%%%%%%%%%%%%%%%%%%%%%%
\section{First order methods with damping}\label{s:order1-damp}

Analogously to the Chebyshev RK methods, in order to pull the root locus curve away from the real axis for $\varphi\in(0,\pi)$ it is necessary to perform a damping transformation with the constructed methods.

Using \eqref{eq:nu}, \eqref{eq:nu2} consider  \eqref{eq:mu} and represent 
\begin{equation*}
	\IM \mu_\beta(e^{i\varphi}) = Q(\varphi)\sin \varphi ,
\end{equation*}
where 
\begin{equation}\label{eq:Q}
	Q(\varphi)=\frac{\sum_{j=0}^{k-1}a_j \cos j \varphi}{|\sigma(e^{i\varphi})|^2}=\frac{\sum_{j=0}^{k-1}a_j \cos j \varphi}{\sum_{j=0}^{k-1}\delta_j \cos j \varphi},
\end{equation}
and
\begin{equation}\label{eq:deltas}
	\delta_0=\sum_{j=0}^{k-1}\beta_j^2,\quad \delta_j=2\sum_{l=0}^j \beta_l \beta_{j+l}.
\end{equation}
Recall that the connection between coefficients $a_j$ and $\beta_j$ is described by the first two equalities of \eqref{eq:b-beta}.

Let $\hat Q(\varphi)$ be the damped method's counterpart of \eqref{eq:Q}. We define it as
\begin{equation}
	\hat Q(\varphi)=\frac{\sum_{j=0}^{k-1}\hat a_j \cos j \varphi}{\sum_{j=0}^{k-1}\hat \delta_j \cos j \varphi}=C(Q(\varphi)+\varepsilon),
\end{equation}
where $\varepsilon$ controls the `shift' from the real axis and $C$ is a scaling constant to be determined. Then we have $\hat a_j = C a'_j$,
\begin{equation}
	a_j'=a_j+\varepsilon \delta_j.
\end{equation}
Now we use this equality together with \eqref{eq:b-beta} and get
\begin{align*}
 	&\beta_j'=\beta_j+\frac{\varepsilon}{2}(\delta_{k-j}+\delta_{k-j-1}),\quad j=0,\ldots,{k-2},\\
 	&\beta_{k-1}'=\beta_{k-1}+\frac{\varepsilon}{2}\delta_1+\varepsilon\delta_0,\quad \delta_k=0.
 \end{align*} 
The coefficients $\hat \beta_j$ of the sought damped method are expressed as $\hat\beta_j=C\beta'_j$. To keep the order of the method equal to one, the constant $C$ should be equal to $\left(\sum_{j=0}^{k-1}\beta_j'\right)^{-1}$. By \eqref{eq:deltas} we have
\begin{equation}
	\sum_{j=0}^{k-1}\beta_j'=\sum_{j=0}^{k-1}\beta_j+\varepsilon\sum_{j=0}^{k-1}\delta_j=\sum_{j=0}^{k-1}\beta_j+\varepsilon\sum_{j,l=0}^{k-1}\beta_j\beta_l.
\end{equation}
Since ${\sum_{j=0}^{k-1}\beta_j=1}$ we finally obtain the following formulae for the coefficients of the damped method:
\begin{subequations}\label{eq:damped-beta}
\begin{align}
 	&\hat\beta_j=\frac{\beta_j + \varepsilon\Delta_j}{1+\varepsilon},\quad j=0,\ldots,{k-1};\label{eq:damped-beta-a}\\
 	&\text{where}\quad \Delta_j=\frac12(\delta_{k-j}+\delta_{k-j-1}),\quad j=0,\ldots,{k-2},\\
 	&\phantom{\text{where}\quad }\Delta_{k-1}=\frac12\delta_{1}+\delta_0.
 	%&\hat\beta_j=\frac{2\beta_j + \varepsilon(\delta_{k-j}+\delta_{k-j-1})}{2(1+\varepsilon)},\quad j=0,\ldots,{k-2},\\
 	%&\hat\beta_{k-1}=\frac{2\beta_{k-1} + \varepsilon(\delta_1+2\delta_0)}{2(1+\varepsilon)}.
\end{align}
\end{subequations}
The values of $\Delta_j$ for $k$ from 2 to 10 for the optimized first-order method \eqref{eq:order1-opt} are presented in Table \ref{table:Deltas}. The stability region boundaries of the one-step methods together with their damped counterparts are displayed at Figure \ref{fig:order1}. 
%Since by construction $\sum_{j,l=0}^{k-1}b_j b_l=\sum_j \tilde a_j$
\begin{proposition}
	Stability interval of the damped method \eqref{eq:damped-beta} with $\beta_j=\beta^*_j=\frac{2j+1}{k^2}$ is equal to $[-\ell_\varepsilon, 0]$, where
	\begin{equation}
	 	\ell_\varepsilon=\frac{6(1+\varepsilon)k^3}{\varepsilon(4k^2-1)+3k^2}.
	\end{equation} 
\end{proposition}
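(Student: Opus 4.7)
The plan is to apply formula \eqref{eq:mu-1} directly to the damped coefficients $\hat\beta_j$, which yields $\ell_\varepsilon=-\mu_{\hat\beta}(-1)=-2(-1)^k/\sum_{j=0}^{k-1}(-1)^j\hat\beta_j$. By \eqref{eq:damped-beta-a} this denominator splits as
\[
\sum_{j=0}^{k-1}(-1)^j\hat\beta_j=\frac{1}{1+\varepsilon}\bigl(S_\beta+\varepsilon S_\Delta\bigr),\qquad S_\beta=\sum_{j=0}^{k-1}(-1)^j\beta_j,\quad S_\Delta=\sum_{j=0}^{k-1}(-1)^j\Delta_j.
\]
The value $S_\beta=(-1)^{k-1}/k$ is already provided by \eqref{eq:sum-1beta}, so the entire problem reduces to computing $S_\Delta$ and substituting.

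The main algebraic step is the identity $S_\Delta=(-1)^{k-1}\delta_0$. To prove it, substitute the two-case definition from \eqref{eq:damped-beta}, split the $j=0,\ldots,k-2$ part of the sum into its $\delta_{k-j}$ and $\delta_{k-j-1}$ pieces, and re-index each via $m=k-j$ (using $\delta_k=0$ at the upper endpoint). The two resulting sums over $m$ differ only in their $m=1$ term, so everything in them cancels except a single $\delta_1$ contribution that is precisely absorbed by the $\tfrac12\delta_1$ inside the boundary term $(-1)^{k-1}(\tfrac12\delta_1+\delta_0)$, leaving just $(-1)^{k-1}\delta_0$. This identity is general and does not use the explicit form of $\beta_j$.

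Once $S_\Delta$ is available, I would compute $\delta_0=\sum_{j=0}^{k-1}\beta_j^2$ for the optimized first-order method $\beta_j=(2j+1)/k^2$ using the elementary identity $\sum_{j=0}^{k-1}(2j+1)^2=k(4k^2-1)/3$, obtaining $\delta_0=(4k^2-1)/(3k^3)$. Substituting $S_\beta$ and $\varepsilon S_\Delta=(-1)^{k-1}\varepsilon(4k^2-1)/(3k^3)$ back into the expression for $\mu_{\hat\beta}(-1)$ makes the sign factor $(-1)^{k-1}$ cancel with $(-1)^k$, and after clearing the common denominator $3k^3$ the claimed closed form drops out. For rigor one also needs $\ell_\varepsilon=-\mu_{\hat\beta}(-1)$, i.e.\ the damped locus does not cross the real axis before $\varphi=\pi$; this is automatic from the damping construction, since $\hat Q(\varphi)=C(Q(\varphi)+\varepsilon)\geq 0$ on $(0,\pi)$ whenever $\varepsilon\geq 0$, so $\hat\beta\in\mathcal F$.

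I expect the main obstacle to be the asymmetric book-keeping in computing $S_\Delta$: the formula for $\Delta_{k-1}$ contains an extra $\delta_0$ not present in the generic $\Delta_j$, and it is precisely this asymmetry that prevents the sum from telescoping to zero and produces the surviving $\delta_0$ term. Careful tracking of the alternating signs during the re-indexing $j\mapsto k-j$ is where sign errors are most likely to appear, but once this identity is secured the rest is a one-line substitution.
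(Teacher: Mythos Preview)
Your proposal is correct and follows essentially the same route as the paper: compute $\mu_{\hat\beta}(-1)$ via \eqref{eq:mu-1}, split the alternating sum of $\hat\beta_j$ into the $\beta^*$-part (handled by \eqref{eq:sum-1beta}) and the $\Delta$-part, identify the latter with $(-1)^{k-1}\delta_0$, evaluate $\delta_0=(4k^2-1)/(3k^3)$, and substitute. The paper's proof simply asserts the identity $\sum_j(-1)^j\Delta_j=(-1)^{k-1}\delta_0$ without justification, whereas you spell out the telescoping argument and also note the feasibility check $\hat\beta\in\mathcal F$ that legitimizes $\ell_\varepsilon=-\mu_{\hat\beta}(-1)$; both additions are genuine improvements in rigor over the paper's terse version.
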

\begin{proof}
	Let us compute
	\begin{equation*}
		\sum_{j=0}^{k-1}(-1)^j \hat \beta_j=(1+\varepsilon)^{-1}
		\left(\sum_{j=0}^{k-1}(-1)^j \beta^*_j+\varepsilon(-1)^{k-1}\delta^*_0\right).
	\end{equation*}
	The first term has already been calculated in \eqref{eq:sum-1beta} and the second is determined as
	\begin{equation*}
		\delta^*_0=\sum_{j=0}^{k-1} (\beta^*_j)^2=\sum_{j=0}^{k-1} \frac{(2j+1)^2}{k^4}=\frac{4k^2-1}{3k^3}.
	\end{equation*}
	From here we finally get
	\begin{equation*}
		\mu_{\hat\beta}(-1)=\frac{2(-1)^k}{\sum_{j=0}^{k-1} (-1)^j\hat\beta_j}=-\frac{6(1+\varepsilon)k^3}{\varepsilon(4k^2-1)+3k^2}.
	\end{equation*}
\end{proof}

\begin{corollary}
Asymptotic length of the damped one-step method is 
\begin{equation}
	\lim_{\varepsilon\to\infty} \ell_\varepsilon=\frac{3}{2}k.
\end{equation}
\end{corollary}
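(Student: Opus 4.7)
The plan is to apply the limit $\varepsilon\to\infty$ directly to the closed-form rational expression for $\ell_\varepsilon$ supplied by the preceding proposition. Dividing numerator and denominator of
\[
\ell_\varepsilon=\frac{6(1+\varepsilon)k^3}{\varepsilon(4k^2-1)+3k^2}
\]
by $\varepsilon$ and applying the standard quotient rule for limits yields at once
\[
\lim_{\varepsilon\to\infty}\ell_\varepsilon=\frac{6k^3}{4k^2-1}.
\]
This one-line computation is the entire substantive content of the argument; no auxiliary lemmas are required.

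To land on the value $\tfrac{3}{2}k$ asserted in the corollary I would next factor
\[
\frac{6k^3}{4k^2-1}=\frac{3k}{2}\cdot\frac{4k^2}{4k^2-1}=\frac{3k}{2}+\frac{3k}{2(4k^2-1)},
\]
which exhibits $\tfrac{3}{2}k$ as the leading term with a correction of order $k^{-1}$, so that the ratio of the two tends to $1$ as $k\to\infty$. This identifies $\tfrac{3}{2}k$ as the large-$k$ asymptotic of the strong-damping limit, consistent with the word \emph{Asymptotic} in the statement of the corollary.

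The only obstacle is interpretive rather than technical: read literally as an identity in $k$, the equation $\lim_{\varepsilon\to\infty}\ell_\varepsilon=\tfrac{3}{2}k$ fails for every finite $k$ (for instance $k=1$ gives the exact value $2$, not $\tfrac{3}{2}$; $k=2$ gives $48/15$, not $3$). I would therefore present the proof in two parts: first display the exact limit $6k^3/(4k^2-1)$ by the division argument above, then extract its leading $k$-asymptotic $\tfrac{3}{2}k$, so that both the precise and the simplified forms are on record and the reader is not misled into expecting a finite-$k$ identity.
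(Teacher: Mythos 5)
Your computation is correct and is the only argument available: the paper states this corollary without proof, and the intended derivation is exactly your one-line limit of the rational expression from the preceding proposition. Your further observation is a genuine catch: the exact limit is $6k^3/(4k^2-1)$, which equals $\tfrac{3}{2}k\cdot\tfrac{4k^2}{4k^2-1}$ and hence agrees with the displayed value $\tfrac{3}{2}k$ only in the large-$k$ sense (e.g.\ $k=1$ gives $2$, not $\tfrac{3}{2}$), so the corollary's equation is literally false for every finite $k$ and should be read, as you suggest, as the leading asymptotic term; your two-part presentation (exact limit first, then its $k\to\infty$ leading behaviour) is the honest way to state the result.
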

\begin{table*}[t]
\[
\renewcommand{\arraystretch}{1.5}
\begin{array}{c|ccccccccccc}
k & \Delta_0 & \Delta_1 & \Delta_2 &\Delta_3  &\Delta_4  &\Delta_5  &\Delta_6  &\Delta_7  &\Delta_8  &\Delta_{9}  \\
\hline
2& \frac{3}{16} & \frac{13}{16} & \text{} & \text{} & \text{} & \text{} & \text{} &
   \text{} & \text{} & \text{} \\
3& \frac{5}{81} & \frac{23}{81} & \frac{53}{81} & \text{} & \text{} & \text{} & \text{} &
   \text{} & \text{} & \text{} \\
 4& \frac{7}{256} & \frac{33}{256} & \frac{79}{256} & \frac{137}{256} & \text{} & \text{}
   & \text{} & \text{} & \text{} & \text{} \\
5& \frac{9}{625} & \frac{43}{625} & \frac{21}{125} & \frac{187}{625} & \frac{281}{625} &
   \text{} & \text{} & \text{} & \text{} & \text{} \\
 6&\frac{11}{1296} & \frac{53}{1296} & \frac{131}{1296} & \frac{79}{432} &
   \frac{121}{432} & \frac{167}{432} & \text{} & \text{} & \text{} & \text{} \\
 7& \frac{13}{2401} & \frac{9}{343} & \frac{157}{2401} & \frac{41}{343} & \frac{445}{2401}
   & \frac{89}{343} & \frac{813}{2401} & \text{} & \text{} & \text{} \\
8& \frac{15}{4096} & \frac{73}{4096} & \frac{183}{4096} & \frac{337}{4096} &
   \frac{527}{4096} & \frac{745}{4096} & \frac{983}{4096} & \frac{1233}{4096} & \text{}
   & \text{} \\
 9& \frac{17}{6561} & \frac{83}{6561} & \frac{209}{6561} & \frac{43}{729} &
   \frac{203}{2187} & \frac{289}{2187} & \frac{1153}{6561} & \frac{1459}{6561} &
  \frac{1777}{6561} & \text{} \\
 10& \frac{19}{10000} & \frac{93}{10000} & \frac{47}{2000} & \frac{437}{10000} &
   \frac{691}{10000} & \frac{989}{10000} & \frac{1323}{10000} & \frac{337}{2000} &
   \frac{2067}{10000} & \frac{2461}{10000} \\
\end{array}
\]
\caption{The values of $\Delta_j$ for determining the coefficients of the optimized first-order methods with damping \eqref{eq:damped-beta-a}, $\beta_j=(2j+1)/k^2$.}\label{table:Deltas}
\end{table*}

%%%%%%%%%%%%%%%%%%%%%%%%%%%%%%%%%%%%%%%%%%%%%%%%%%%%%%%%%%%%%%%%%%%%%%%
\section{Higher order methods}\label{s:high-order}

To construct a stabilized $k$-step Adams-type method of order $p$ one should use the general form of optimization problem \eqref{eq:opt-b} with mapping $T$ specified by \eqref{eq:b-beta}. For example, for $k=5$, $p=4$, the problem in terms of $b_j$ takes the form
\begin{equation*}
\begin{array}{l}
\text{minimize}\quad b_0^2+b_1^2+b_2^2+b_3^2+b_4^2\quad \text{subject to}\\
 \left(b_0+b_1+b_2+b_3+b_4\right){}^2=1, \\
 b_2 b_3+\left(3 b_2+b_3\right) b_4+b_1 \left(b_2+3 b_3+5 b_4\right)+b_0 \left(b_1+3 b_2+5 b_3+7 b_4\right)=-\frac{1}{2}, \\
 b_2 b_3+\left(5 b_2+b_3\right) b_4+b_1 \left(b_2+5 b_3+13 b_4\right)+b_0 \left(b_1+5 b_2+13 b_3+25 b_4\right)=\frac13,
   \\
 b_2 b_3+\left(9 b_2+b_3\right) b_4+b_1 \left(b_2+9 b_3+35 b_4\right)+b_0 \left(b_1+9 b_2+35 b_3+91 b_4\right)=-\frac{1}{4}.\\
\end{array} 	
 \end{equation*} 
 The symbolic solution of this problem yielded by Wolfram 
 \emph{Mathematica} after transforming back to the initial variables $\beta_j$ is
 \begin{equation*}
 \beta^*=\left(-\frac{1}{4},\frac{5}{8},\frac{1}{24},-\frac{35}{24},\frac{49}{24}\right)
 \end{equation*}
with $\ell=0.75$, to compare with $\ell=0.3$ of the classical explicit Adams method of order 4. Another neat example is the 5-step method of order 2:
\begin{equation}
	\beta^*=\left(-\frac{1}{8} \left(3-\sqrt{5}\right),-\frac{3}{4} \left(\sqrt{5}-2\right),0,\frac{7}{4}
   \left(\sqrt{5}-2\right),\frac{9}{8} \left(3-\sqrt{5}\right)\right)
\end{equation}
with $\ell=2+\frac{4}{\sqrt 5}\approx 3.789$. The stability regions of these and the rest of the 5-step methods are shown at the uppermost part of figure \ref{fig:k567}. 

Unfortunately 
it is not always possible to obtain solution symbolically, thus we compute the coefficients of our methods numerically using \emph{Mathematica}'s function \texttt{NMinimize}, see the corresponding code in \ref{app:code}. We used 50-digit working precision and computed the $(k,p)$ methods for $k$ from 3 to 10 and $p$ from 2 to $k$ (with the latter value corresponding to the classical Adams methods). The results with 20-digit accuracy are displayed at Tables \ref{tab:o2-5} and \ref{tab:o6-9}. It is interesting that the (4,3) method coincides with \cite[(5.4)]{XuZhao}. The stability regions of 5-, 6- and 7-step methods are shown at Figure \ref{fig:k567}. 

To assess the accuracy of the obtained coefficients we checked the magnitude of the order residuals $G_q(\beta^*)$, see \eqref{eq:b-order1}, \eqref{eq:b-orderp}. In all convergent cases these residuals do not exceed $10^{-19}$. Note that \texttt{NMinimize} failed to converge in the following cases: $(k,p)=(7,6)$ and for all ${p\geq 7}$. Our hypothesis is that for these $(k,p)$ combinations the Adams-type methods satisfying our restrictions do not exist. Note that (11, 7) method seem to exist and has microscopic ${\ell\approx0.051}$ which is just slightly more than ${\ell\approx0.0465}$ of the $(7,7)$ case. The error constants of all the calculated methods are presented at Table \ref{tab:errconst}.

\begin{table}
\small
\begin{center}
	\begin{tabular}{c|llllll}
	$k$  & $p=1$ & $p=2$ & $p=3$ & $p=4$ & $p=5$ & $p=6$\\
	\hline
	2 & 0.75 &  &  &  &  &    \\
	3 & 1.0556 & 0.66667 &  &  &  &    \\
	4 & 1.375 & 1.0380 & 0.62500 &  &  &   \\
	5 & 1.7 & 1.5208 & 1.0227 & 0.59861 &  &    \\
	6 & 2.0278 & 2.1128 & 1.5972 & 1.0120 & 0.57928 &    \\
	7 & 2.3571 & 2.8134 & 2.3814 & 1.6471 & 1.0032 &  \\
	8 & 2.6875 & 3.6223 & 3.4092 & 2.5751 & 1.6825 & 0.99505 \\
	9 & 3.0185 & 4.5392 & 4.7148 & 3.8788 & 2.7235 & 1.7079   \\
	10& 3.35 & 5.5643 & 6.3328 & 5.6524 & 4.2616 & 2.8403 \\
	\end{tabular}
\end{center}
\caption{Error constants of the stabilized Adams-type methods.}
\label{tab:errconst}
\end{table}
%%%%%%%%%%%%%%%%%%%%%%%%%%%%%%%%%%%%%%%%%%%%%%%%%%%%%%%%%%%%%%%%%%%%%%%

\section{Numerical experiments}\label{s:num-exp}

The purpose of the experiment is to verify accuracy and stability properties of the constructed stabilized Adams methods. 
We also display results of classic implicit Adams methods of corresponding orders, which have longer stability intervals than their classical explicit counterparts.
In all our experiments we use constant step size and reference solutions computed by Wolfram \emph{Mathematica}'s \texttt{NDSolve}. The starting points were taken from this reference solution. For each method we perform a series of constant-step integrations with decreasing step size $\tau$ and calculate maximum norm of the error at the endpoint.  Missing points on the convergence diagrams mean that the error is too large due to instability of the method for the particular value of $\tau$. 
\begin{figure}
	\begin{center}
		\includegraphics[width=6.7cm]{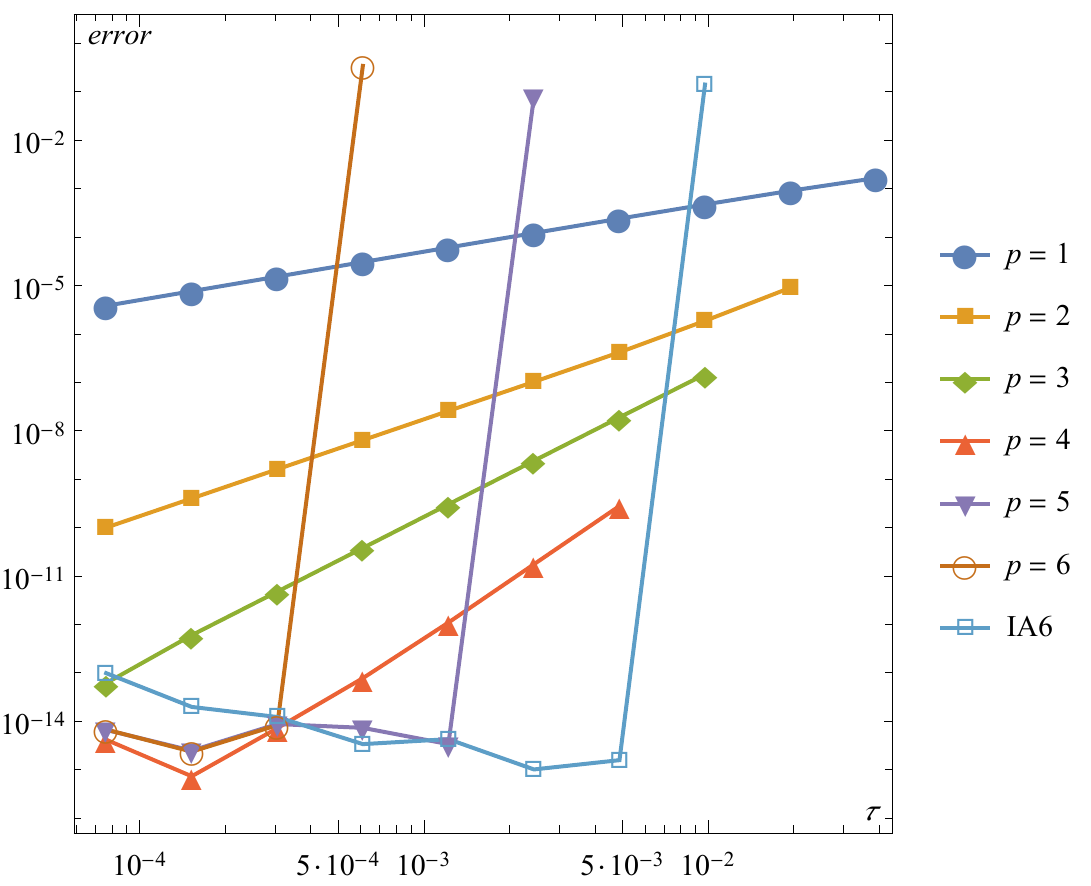}
		\includegraphics[width=6.7cm]{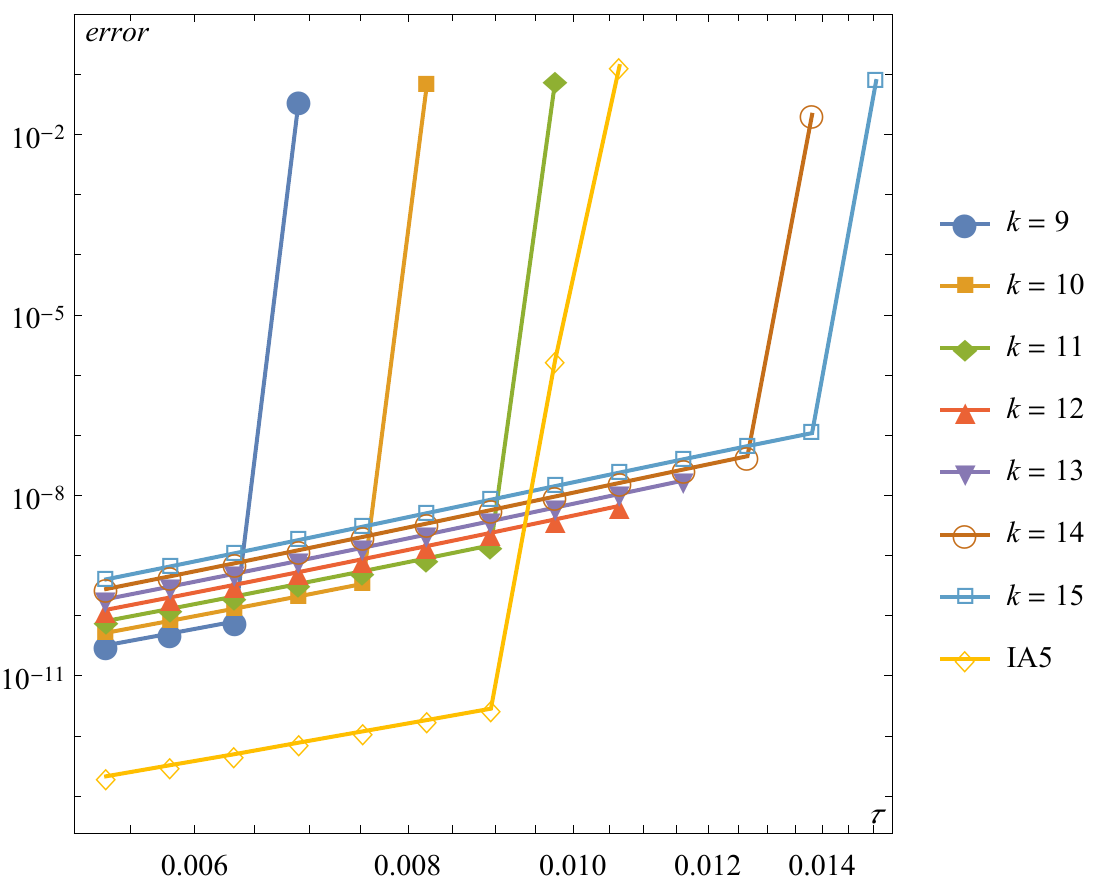}
	\end{center}
	\caption{Numerical experiment with HIRES problem.  Six-step stabilized methods and implicit Adams method of order~6 (left). Stabilized methods of order 5 and implicit Adams method of the same order (right).}\label{fig:hires-exper}
\end{figure}

% \begin{figure}
% \begin{center}
% 		\includegraphics[width=9cm]{hires-k6.pdf}
% 	\end{center}
% 	\caption{Numerical experiment: HIRES problem for 6-step stabilized methods and implicit Adams method of order 6.}\label{fig:hires-k6}
% \end{figure}

\paragraph{HIRES} This is a classical mildly stiff test system of dimension 8 describing a chemical reaction, see \cite[IV.10, (10.4)]{HW}. All equations except the 6-th and the 7-th are linear. The interval of integration is $[0,40]$. Figure \ref{fig:hires-exper} (left) shows the performance of 6-step stabilized methods of orders 1-6 and implicit method of order 6. We see that the results agree well with common sense: more accurate methods have shorter stability intervals. Then we compare methods of order 5 and display the results at Figure \ref{fig:hires-exper} (right), where we took $k$ from 9 to 15 in order to get larger stability intervals than the implicit method have. There is a clear evidence that methods with larger $k$ have larger error constants. We don't show the resuls of the damped first-order method, since the difference compared to the simple non-damped methods is negligible for this test problem.
% \begin{figure}
% 	\begin{center}
% 		\includegraphics[width=9cm]{hires-o5.pdf}
% 		%\includegraphics[width=6.5cm]{hires-k6.pdf}
% 	\end{center}
% 	\caption{Numerical experiment: HIRES problem for stabilized methods of order 5 and implicit Adams method of the same order.}\label{fig:hires-o5}
% \end{figure}

\paragraph{Burgers' equation} The second problem is taken from \cite{Abdulle}. Consider a method of lines discretization of the one-dimensional nonlinear boundary value problem 
\begin{equation}\label{eq:burgers}
\renewcommand{\arraystretch}{1.5}
\begin{array}{lll}
	u_t+\left(\dfrac{u^2}{2}\right)_x=\mu u_{xx},\quad x\in[0,1],\quad t\in[0,2.5],\\
	u(x, 0)=1.5 x(1-x)^2,\\
	u(0,t)=u(1,t)=0.
\end{array}
\end{equation}
The spatial derivatives are approximated by standard central finite differences, the discretization step is $\Delta x=1/501$, so the dimension of the resulting ODE is 500. Jacobi matrix of this problem is not symmetric and complex eigenvalues occur for sufficiently small values of $\mu$. We took $\mu=0.005$ for which this is not the case at the starting point, but apparently non-real eigenvalues do emerge during integration.

The first experiment is similar to the one from the previous problem. The results are presented on the left side of Figure \ref{fig:burgers-exper}: we compare six-step methods of different orders. We see that the first-order method with damping allows for taking longer time steps than the non-damped one. This indicates that the solution generates non-real eigenvalues of Jacobi matrix. Hence it is unlikely to benefit from using stabilized methods with large $k$ and $p$, for which we don't have damping yet. Indeed, our experiments showed that these methods cannot take larger steps than implicit Adams methods of the same order, even if their stability interval is longer. That's why on the right side of Figure \ref{fig:burgers-exper} we compare only stabilized explicit methods of order one with the implicit Euler method. This experiment shows that damping is crucial for the general performance of a stabilized method. Another obvious conclusion is that the explicit methods are less accurate than the implicit one.

\begin{figure}
	\begin{center}
		\includegraphics[width=6.7cm]{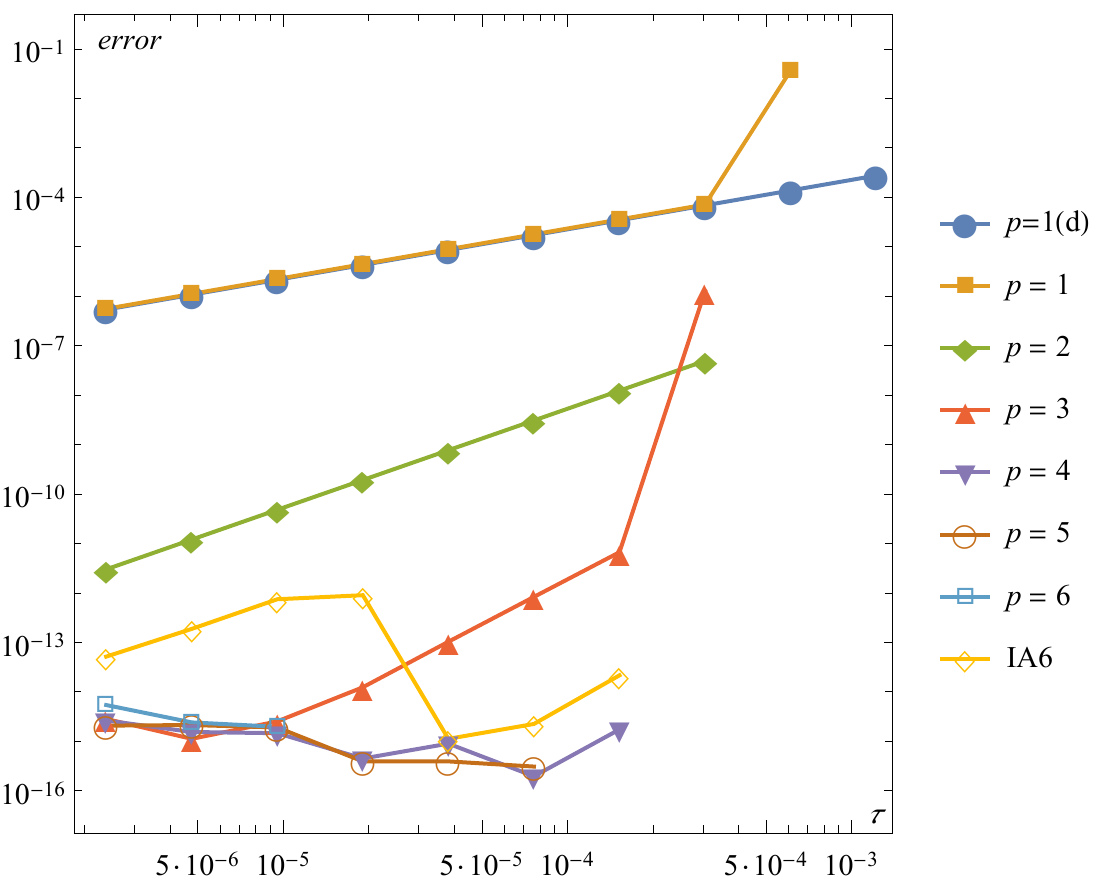}
		\includegraphics[width=6.7	cm]{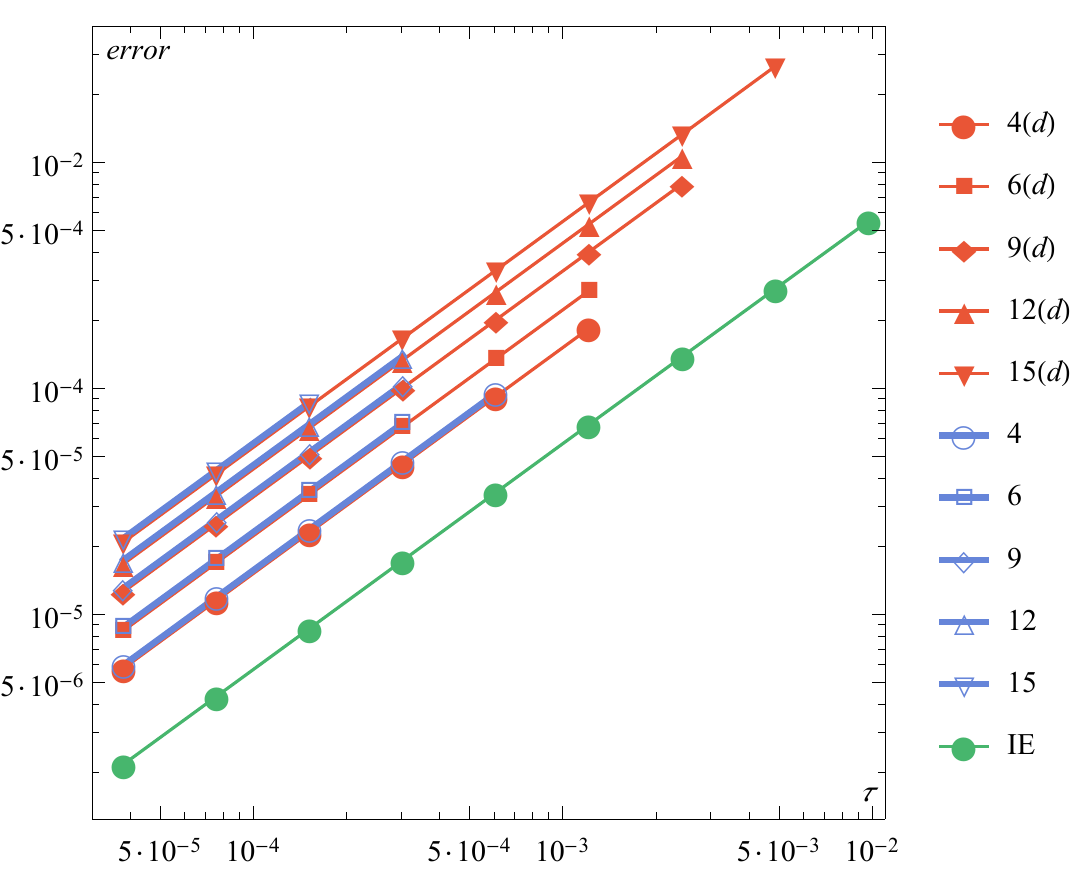}
	\end{center}
	\caption{Numerical experiment with Burgers equation \eqref{eq:burgers}.  Six-step stabilized methods and implicit Adams method of order~6 (left). First order stabilized methods with and without damping for $\varepsilon=0.25$, $k=4, 6, 9, 12, 15$, and the implicit Euler method (right).}\label{fig:burgers-exper}
\end{figure}

%%%%%%%%%%%%%%%%%%%%%%%%%%%%%%%%%%%%%%%%%%%%%%%%%%%%%%%%%%%%%%%%%%%%%%%
\section{Conclusion}\label{s:conclusion}

In this work we presented explicit multistep methods of Adams type, which possess extended stability intervals. A simple formulae for the first order methods and their error constant are derived. We also applied damping to the first order methods, derived a general scheme for construction of stabilized $k$-step methods of any order $p<k$, and calculated coefficients for such methods numerically.
It was shown that the error constant of the stabilized grows as the number of steps increases, but this growth is quite slow. 
Our numerical experiments asserted the theoretical properties of accuracy and stability of the constructed methods and exhibited the importance of damping transformation for the methods.

In our opinion, by now the stabilized Adams-type methods are mostly of theoretical interest. But it cannot be ruled out that they can be useful in practice and become a base for a competetive solver for mildly stiff problems. From a practical perspective the methods are attractive due to their low cost (just one evaluation of $f$ per step for any $k$ and $p$) and simplicity of implementation.   The weak point is that long stability intervals require large number of steps, which may entail overhead related to initialization of starting values, step size control and so on.

\paragraph{Acknowledgement} The work is supported by Belarusian government program of scientific research `Convergence-2020'.

\bibliographystyle{elsarticle-num-names} 
\bibliography{stabadams}

\begin{thebibliography}{9}
\expandafter\ifx\csname natexlab\endcsname\relax\def\natexlab#1{#1}\fi
\providecommand{\url}[1]{\texttt{#1}}
\providecommand{\href}[2]{#2}
\providecommand{\path}[1]{#1}
\providecommand{\DOIprefix}{doi:}
\providecommand{\ArXivprefix}{arXiv:}
\providecommand{\URLprefix}{URL: }
\providecommand{\Pubmedprefix}{pmid:}
\providecommand{\doi}[1]{\href{http://dx.doi.org/#1}{\path{#1}}}
\providecommand{\Pubmed}[1]{\href{pmid:#1}{\path{#1}}}
\providecommand{\bibinfo}[2]{#2}
\ifx\xfnm\relax \def\xfnm[#1]{\unskip,\space#1}\fi
%Type = Book
\bibitem[{Hairer et~al.(1993)Hairer, N{\o}rsett, and Wanner}]{HW}
\bibinfo{author}{E.~Hairer}, \bibinfo{author}{S.~N{\o}rsett},
  \bibinfo{author}{G.~Wanner}, \bibinfo{title}{Solving Ordinary Differential
  Equations II: Stiff and Differential-Algebraic Problems}, Solving Ordinary
  Differential Equations II: Stiff and Differential-algebraic Problems,
  \bibinfo{publisher}{Springer}, \bibinfo{year}{1993}. \URLprefix
  \url{https://books.google.by/books?id=m7c8nNLPwaIC}.
%Type = Incollection
\bibitem[{Lebedev(1994)}]{lebedev}
\bibinfo{author}{V.~Lebedev},
\newblock \bibinfo{title}{How to solve stiff systems of differential equations
  by explicit methods},
\newblock in: \bibinfo{booktitle}{Numerical Methods and Applications (1994)},
  \bibinfo{publisher}{CRC Press}, \bibinfo{year}{1994}, pp.
  \bibinfo{pages}{45--80}.
%Type = Article
\bibitem[{Sommeijer et~al.(1998)Sommeijer, Shampine, and Verwer}]{sommeijer}
\bibinfo{author}{B.~Sommeijer}, \bibinfo{author}{L.~Shampine},
  \bibinfo{author}{J.~Verwer},
\newblock \bibinfo{title}{Rkc: An explicit solver for parabolic pdes},
\newblock \bibinfo{journal}{Journal of Computational and Applied Mathematics}
  \bibinfo{volume}{88} (\bibinfo{year}{1998}) \bibinfo{pages}{315 -- 326}.
  \URLprefix
  \url{http://www.sciencedirect.com/science/article/pii/S0377042797002197}.
  \DOIprefix\doi{https://doi.org/10.1016/S0377-0427(97)00219-7}.
%Type = Article
\bibitem[{Abdulle(2002)}]{Abdulle}
\bibinfo{author}{A.~Abdulle},
\newblock \bibinfo{title}{Fourth order chebyshev methods with recurrence
  relation},
\newblock \bibinfo{journal}{SIAM Journal on Scientific Computing}
  \bibinfo{volume}{23} (\bibinfo{year}{2002}) \bibinfo{pages}{2041--2054}.
  \URLprefix \url{https://doi.org/10.1137/S1064827500379549}.
  \DOIprefix\doi{10.1137/S1064827500379549}.
  \href{http://arxiv.org/abs/https://doi.org/10.1137/S1064827500379549}{{\tt
  arXiv:https://doi.org/10.1137/S1064827500379549}}.
%Type = Article
\bibitem[{Jeltsch and Nevanlinna(1981)}]{Jeltsch1981}
\bibinfo{author}{R.~Jeltsch}, \bibinfo{author}{O.~Nevanlinna},
\newblock \bibinfo{title}{Stability of explicit time discretizations for
  solving initial value problems},
\newblock \bibinfo{journal}{Numerische Mathematik} \bibinfo{volume}{37}
  (\bibinfo{year}{1981}) \bibinfo{pages}{61--91}. \URLprefix
  \url{https://doi.org/10.1007/BF01396187}. \DOIprefix\doi{10.1007/BF01396187}.
%Type = Article
\bibitem[{Jeltsch and Nevanlinna(1982)}]{Jeltsch1982}
\bibinfo{author}{R.~Jeltsch}, \bibinfo{author}{O.~Nevanlinna},
\newblock \bibinfo{title}{Stability and accuracy of time discretizations for
  initial value problems},
\newblock \bibinfo{journal}{Numerische Mathematik} \bibinfo{volume}{40}
  (\bibinfo{year}{1982}) \bibinfo{pages}{245--296}. \URLprefix
  \url{https://doi.org/10.1007/BF01400542}. \DOIprefix\doi{10.1007/BF01400542}.
%Type = Book
\bibitem[{Daubechies(1992)}]{Daubechies}
\bibinfo{author}{I.~Daubechies}, \bibinfo{title}{Ten lectures on wavelets},
  CBMS-NSF regional conference series in applied mathematics 61,
  \bibinfo{edition}{1} ed., \bibinfo{publisher}{Society for Industrial and
  Applied Mathematics}, \bibinfo{year}{1992}.
%Type = Book
\bibitem[{Hairer et~al.(1993)Hairer, Norsett, and Wanner}]{HNW}
\bibinfo{author}{E.~Hairer}, \bibinfo{author}{S.~P. Norsett},
  \bibinfo{author}{G.~Wanner}, \bibinfo{title}{Solving Ordinary Differential
  Equations I. Nonstiff Problems}, \bibinfo{edition}{2nd rev. ed. 1993. corr.
  3rd printing} ed., \bibinfo{publisher}{Springer}, \bibinfo{address}{Berlin},
  \bibinfo{year}{1993}. \URLprefix
  \url{https://archive-ouverte.unige.ch/unige:12346}, \bibinfo{note}{iD:
  unige:12346}.
%Type = Article
\bibitem[{Xu and Zhao(2010)}]{XuZhao}
\bibinfo{author}{Y.~Xu}, \bibinfo{author}{J.~J. Zhao},
\newblock \bibinfo{title}{Estimation of longest stability interval for a kind
  of explicit linear multistep methods},
\newblock \bibinfo{journal}{Discrete Dynamics in Nature and Society}
  \bibinfo{volume}{2010} (\bibinfo{year}{2010}) \bibinfo{pages}{1--18}.
  \URLprefix \url{https://EconPapers.repec.org/RePEc:hin:jnddns:912691}.

\end{thebibliography}

\begin{figure*}
	\begin{center}
		\includegraphics[width=\textwidth]{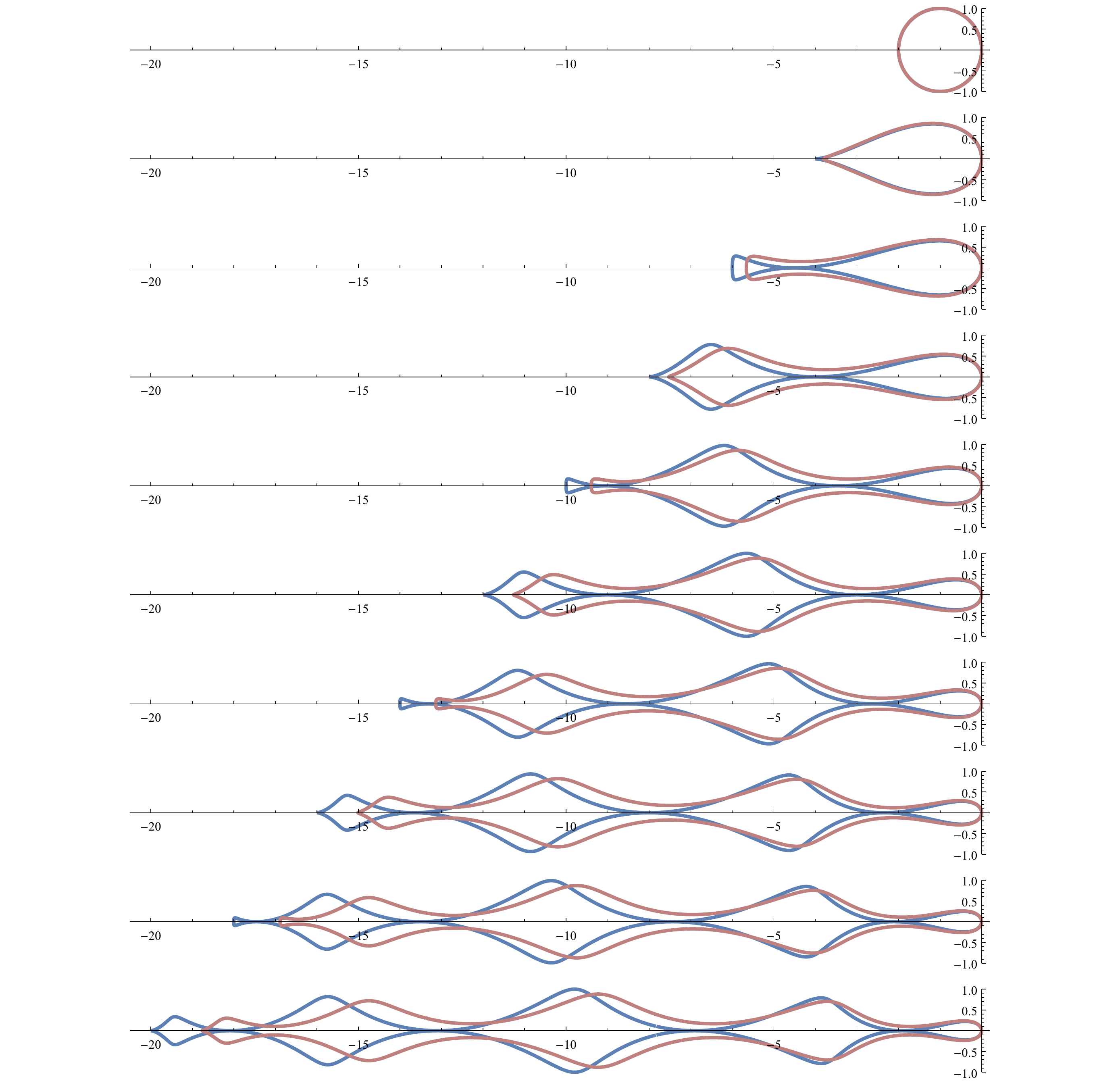}
	\end{center}
	\caption{Stability regions of the optimized first order methods and their damped versions ($\varepsilon=0.25$), for $k=1,\ldots,10.$}
	\label{fig:order1}
\end{figure*}

\begin{figure*}
	\includegraphics[width=\textwidth]{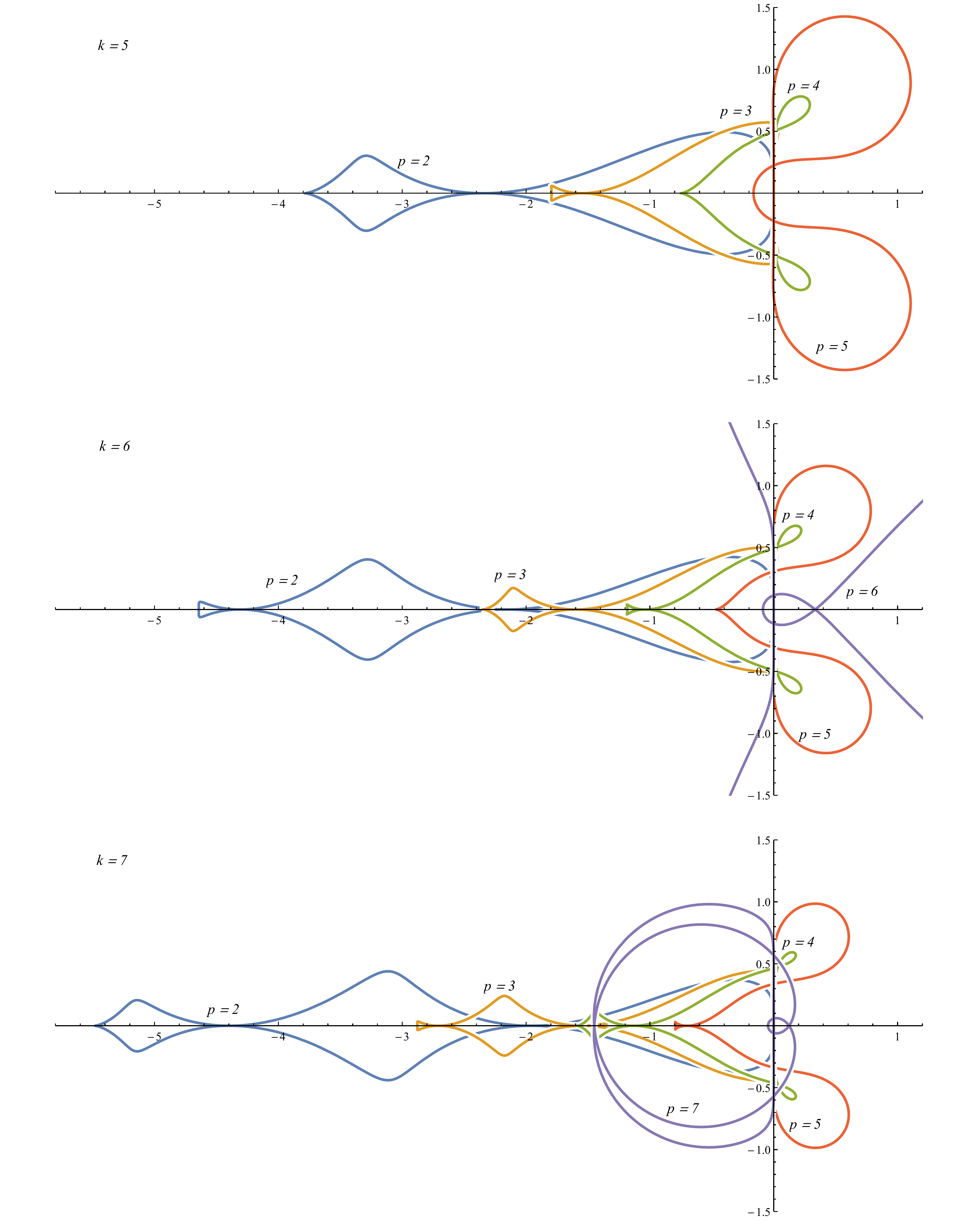}
	\caption{Stability regions of the multistep optimized methods for $k=5, 6, 7$.}
	\label{fig:k567} 
\end{figure*}

\begin{table*}
\scriptsize
\[
\begin{array}{lllll}
k
&
\multicolumn{1}{c}{\text{Order 2}} & 
\multicolumn{1}{c}{\text{Order 3}} &
\multicolumn{1}{c}{\text{Order 4}} &
\multicolumn{1}{c}{\text{Order 5}} \\
\hline\hline
 3 & \ell= 2 & \ell=0.545454545454545455 &    & \text{} \\
  & -0.25 & 0.41666666666666666667 &    &  \\
  & 0 & -1.3333333333333333333 &    &  \\  
  & 1.25 & 1.9166666666666666667  &    &  \\ 
 \hline
 4 & \ell=2.914213562373095 & \ell=1.2 &   \ell=0.3 & \text{} \\
  & -0.14644660940673046069 & 0.25 & -0.37500000000000000000 & \text{}   \\
 \text{} & -0.18198051533945963691 & -0.33333333333333333333 & 1.5416666666666666667 &   \text{} \\
 \text{} & 0.30330085889911065590 & -0.58333333333333333333 & -2.4583333333333333333 & \text{}   \\
 \text{} & 1.0251262658470794417 & 1.6666666666666666667 &  2.2916666666666666667 & \text{} \\
 \hline
 5 & \ell=3.788854381999832 & \ell=1.793779334348686 &   \ell=0.75 & \ell=0.1633393829401088 \\
 \text{} & -0.095491502812526287949 & 0.16437694101246125619 &   -0.25 & 0.34861111111111111111  \\
 \text{} & -0.17705098312484227231 & -0.0097910917750136439271 &   0.625 & -1.7694444444444444444  \\
 \text{} & 0 & -0.54022170408305993867 & 0.041666666666666666667 & 3.6333333333333333333  \\
 \text{} & 0.41311896062463196872 & -0.047691080558684215630 &   -1.4583333333333333333 & -3.8527777777777777778 \\
 \text{} & 0.85942352531273659154 & 1.4333269354042965420 &   2.0416666666666666667 &  2.6402777777777777778 \\
   \hline
 6 & \ell=4.642734410091836 & \ell=2.347826086956522 &   \ell=1.181897711989360 & \ell=0.469157254561251 \\
 \text{} & -0.066987298107786995665 & 0.11574074074074731606 &   -0.17622805914576966884 & 0.24942129629629629629 \\
 \text{} & -0.14711431702997807715 & 0.087962962962956387640 &   0.21777962430380174276 & -0.89849537037037037036 \\
 \text{} & -0.089745962155603046598 & -0.28703703703705018768 &   0.51616209424248971734 & 0.72476851851851851851 \\
 \text{} & 0.12564434701786943107 & -0.40740740740739425676 &   -0.67621677042591625354 & 1.1391203703703703704 \\
 \text{} & 0.44134295108991756459 & 0.24537037037037694569 &   -0.68603094336199527180 & -2.6056712962962962963 \\
 \text{} & 0.73686027918558112375 & 1.2453703703703637950 &   1.8045340543873897341 & 2.3908564814814814815 \\
 \hline
 7 & \ell=5.484476959454063 & \ell=2.877558710633067 &
   \ell=1.586803103995642 & \ell=0.792362028995767 \\
 \text{} & -0.049515566048790436882 & 0.085721156820309456282 &
   -0.13027657069924974882 & 0.18480570522895041008 \\
 \text{} & -0.11912520277278577227 & 0.11154612811463327941 &
   0.040823321662060514133 & -0.43546201769087620565 \\
 \text{} & -0.11018250002552420585 & -0.11721033134808636645 &
   0.45157410201399110594 & -0.24616437886876401181 \\
 \text{} & 0 & -0.35463665779124584907 & 0.016001789308425411316 &
   1.2681635878048099022 \\
 \text{} & 0.19832850004594357054 & -0.21744000532205222576 &
   -0.79486796947441511195 & -0.32830322506067306370 \\
 \text{} & 0.43679241016688116501 & 0.39557372791516432979 &
   -0.18702302114721451156 & -1.5947509407373489642 \\
 \text{} & 0.64370235863427567947 & 1.0964459816112773758 &
   1.6037683483364023409 & 2.1517112693239019330 \\
 \hline
 8 & \ell=6.318535592272045 & \ell=3.391689975797208 &
   \ell=1.970916561391601 & \ell=1.105498503602666 \\
 \text{} & -0.038060233744366798686 & 0.065966021983597280828 &
   -0.10001878254782277331 & 0.14160831078216433500 \\
 \text{} & -0.096797724520983369102 & 0.11032441087323208003 &
   -0.035748890463804216949 & -0.19477889703735130008 \\
 \text{} & -0.10779695287351088696 & -0.022713554363876414313 &
   0.30658371766113087497 & -0.45247252238839228671 \\
 \text{} & -0.052994558379770972895 & -0.23691021182637878968 &
   0.27749085554924180902 & 0.57636630123759032103 \\
 \text{} & 0.068135860774038963863 & -0.30634225579338833174 &
   -0.33516540035839458087 & 0.78354708230483585981 \\
 \text{} & 0.23715329632173532873 & -0.055862376110155554778 &
   -0.67199165170356770075 & -0.91953823465464150558 \\
 \text{} & 0.41945680625751858277 & 0.46825151960079988906 &
   0.12122231459728224806 & -0.87725216386466696327 \\
 \text{} & 0.57090350616533915229 & 0.97728644563616984059 &
   1.4376278372659343398 & 1.9425201236204615397 \\
 \hline
 9 & \ell=7.147430550561413 & \ell=3.895290219607647 &
   \ell=2.339983407348191 & \ell=1.405151117615213 \\
 \text{} & -0.030153689607037932268 & 0.052301051895272605013 &
   -0.079129092227346338565 & 0.11167958745225367479 \\
 \text{} & -0.079550128858107345641 & 0.10126696210118874790 &
   -0.067460438055823679907 & -0.068703909200215014827 \\
 \text{} & -0.098407115533249091604 & 0.026642016446313140531 &
   0.18522989963169925608 & -0.41559134883278779976 \\
 \text{} & -0.073305865502781992742 & -0.13793192915668298604 &
   0.31675641768693750027 & 0.075957984853647800951 \\
 \text{} & 0 & -0.26695490513260400200 & 0.0076996887855987555993 &
   0.78975711968445738645 \\
 \text{} & 0.11519493150433742625 & -0.21907659037234928746 &
   -0.48561642796053139031 & 0.16879857406276817077 \\
 \text{} & 0.25585850038645603291 & 0.064279256258874647289 &
   -0.48641107197220078201 & -1.0382277451316307602 \\
 \text{} & 0.39775064429061670700 & 0.49902127636474858744 &
   0.30896699066825414262 & -0.38771987715090903834 \\
 \text{} & 0.51261272331978661124 & 0.88045286159523854733 &
   1.2999640334434125362 & 1.7640496142624155802 \\
 \hline
 10 & \ell=7.972691637812280 & \ell=4.391469108714782 &
   \ell=2.698087099023256 & \ell=1.692885048664239 \\
 \text{} & -0.024471741852422821505 & 0.042467110956300544552 &
   -0.064133502960306610717 & 0.090219510737302839601 \\
 \text{} & -0.066228831765768206903 & 0.090440497652067647206 &
   -0.078573353260495406661 & -0.0021584562050617957037 \\
 \text{} & -0.087599164129385382526 & 0.051030056647860180918 &
   0.099782736471490155539 & -0.32195487552605745395 \\
 \text{} & -0.078738975641538713579 & -0.068250050077061163328 &
   0.27409149956975402355 & -0.17148478569282268595 \\
 \text{} & -0.034883488233566344682 & -0.19902094851262917934 &
   0.17521906381042658379 & 0.47486789482155684885 \\
 \text{} & 0.042635374507685291073 & -0.24395517042504782618 &
   -0.20265793719790100791 & 0.59839764726184595395 \\
 \text{} & 0.14622952619142684103 & -0.12913104538091815896 &
   -0.50346262595639964788 & -0.27671853444446566397 \\
 \text{} & 0.26279749238816316420 & 0.14896994335213981908 &
   -0.30713843196368842739 & -0.94638400314820567730 \\
 \text{} & 0.37529671333936471557 & 0.50694059780591167508 &
   0.42196137154381443077 & -0.057121557681252610888 \\
 \text{} & 0.46496309519604145733 & 0.80050900798137646097 &
   1.1849111799433059069 & 1.6123371598771602453 \\
\end{array}
\]
\normalsize
\caption{Coefficients and stability interval lengths of the optimized methods of order 2-5}
\label{tab:o2-5}
\end{table*}
%%%%%%%%%%%%%%%%%%%%%%%%%%%%%%%%%%%%%%%%%%%%%%%%%%%%%%%%%%%%%%%%%%%%%%%

\begin{table*}
\scriptsize
\[
\begin{array}{lllll}
k
&
\multicolumn{1}{c}{\text{Order 6}} & 
\multicolumn{1}{c}{\text{Order 7}} &
\multicolumn{1}{c}{\text{Order 8}} &
\multicolumn{1}{c}{\text{Order 9}} 
\\\hline\hline
6 & \ell=0.08771929824561404 & \text{} & \text{} & \text{} \\
 \text{} & -0.32986111111111111111 & \text{} & \text{} & \text{} \\
 \text{} & 1.9979166666666666667 & \text{} & \text{} & \text{} \\
 \text{} & -5.0680555555555555556 & \text{} & \text{} & \text{} \\
 \text{} & 6.9319444444444444444 & \text{} & \text{} & \text{} \\
 \text{} & -5.5020833333333333333 & \text{} & \text{} & \text{} \\
 \text{} & 2.9701388888888888889 & \text{} & \text{} & \text{} \\
\hline
7 & \text{NOT CONVERGED} & \ell=0.04651391725937046 &
   \text{} & \text{} \\
 \text{} &  & 0.31559193121693121693 & \text{} & \text{}
   \\
 \text{} &  & -2.2234126984126984127 & \text{} & \text{}
   \\
 \text{} &  & 6.7317956349206349206 & \text{} & \text{}
   \\
 \text{} &  & -11.379894179894179894 & \text{} &
   \text{} \\
 \text{} &  & 11.665823412698412698 & \text{} & \text{} \\
 \text{} &  & -7.3956349206349206349 & \text{} & \text{}
   \\
 \text{} &  & 3.2857308201058201058 & \text{} & \text{} \\
\hline
8 & \ell=0.5290722934773335 & \text{NOT CONVERGED} &   \ell=0.02440851327616489 & \text{} \\
 \text{} & -0.19113689616832294585 &  &   -0.30422453703703703704 & \text{} \\
 \text{} & 0.65850013289950086628 &  &   2.4451636904761904762 & \text{} \\
 \text{} & -0.26698708897333444593 &  &   -8.6121279761904761905 & \text{} \\
 \text{} & -1.5041640716234265487 &  &   17.379654431216931217 & \text{} \\
 \text{} & 1.8313158841283364334 &  &   -22.027752976190476190 & \text{} \\
 \text{} & 0.75394715979782998677 &  &   18.054538690476190476 & \text{} \\
 \text{} & -2.7632927648390354259 &  &   -9.5252066798941798942 & \text{} \\
 \text{} & 2.4818176447784520799 &  &   3.5899553571428571429 & \text{} \\
\hline
9 & \ell=0.7745044113664562 & \text{NOT CONVERGED} &   \text{NOT CONVERGED} & \ell=0.01270447596389330 \\
 \text{} & -0.15072405770953055168 &  &    & 0.29486800044091710758 \\
 \text{} & 0.36616417962483152368 &  &    & -2.6631685405643738977 \\
 \text{} & 0.26486240742135927331 &  &    & 10.701467702821869489 \\
 \text{} & -1.1679360960270178460 &  &    & -25.124736000881834215 \\
 \text{} & -0.049706767276153478305 &  &    & 38.020414462081128748 \\
 \text{} & 1.8307408258122144817 &  &    & -38.540361000881834215 \\
 \text{} & -0.54006451441787310785 &  &    & 26.310842702821869489 \\
 \text{} & -1.8201171395869259716 &  &    & -11.884150683421516755 \\
 \text{} & 2.2667811621590956802 &  &    & 3.8848233575837742504 \\
\hline
10 & \ell=1.015322150308401 & \text{NOT CONVERGED} &   \text{NOT CONVERGED} & \text{NOT CONVERGED} \\
 \text{} & -0.12149925981588955161 &  &   &  \\
 \text{} & 0.19502001210515154522 &  &   & \\
 \text{} & 0.40323654967363550399 &   &   &  \\
 \text{} & -0.60200414081780015659 &   &   &  \\
 \text{} & -0.79801775043705878458 &   &   &  \\
 \text{} & 0.91298862642764008111 &   &   &  \\
 \text{} & 1.1648437230850238167 &   &   &  \\
 \text{} & -1.1001111732352200672 &   &   &  \\
 \text{} & -1.1334723376167517028 &   &   &  \\
 \text{} & 2.0790157506312693158 &   &   &  
\end{array}
\]
\normalsize
\caption{Coefficients and stability interval lengths of the optimized methods of order 6}
\label{tab:o6-9}
\end{table*}

\newpage
\appendix
\section{\emph{Mathematica} code for computing the stabilized method's parameters}\label{app:code}
%\begin{figure*}
\footnotesize
\begin{verbatim}
ClearAll[a, b, beta, oc, mu]
k = 5; o = 3;
mu[betas_List] := With[{k = Length@betas}
   , Evaluate[(#^k - #^(k - 1))/(#^Range[0, k - 1]).betas] &
   ];
param = {a[-1] -> 0, a[k] -> 0
   , a[k - 1] -> Sum[b[j]^2, {j, 0, k - 1}]
   , a[j_] :> Sum[b[l] b[l + k - j - 1], {l, 0, j}]
   , beta[k - 1] -> a[k - 1] + a[k - 2]
   , beta[j_] :> a[j - 1] + a[j]
   };
bs = b /@ Range[0, k - 1];
betas = beta /@ Range[0, k - 1];
oc[1] = Total@betas - 1;
oc[p_] := Simplify[betas.Range[1 - k, 0]^(p - 1)] - 1/p;
cons = Thread[(oc /@ Range[o] //. param) == 0];
sol = NMinimize[Prepend[cons, bs.bs], bs
   , Method -> Automatic
   , WorkingPrecision -> 50
   , AccuracyGoal -> 25
   , PrecisionGoal -> 25
   , MaxIterations -> 1000
   ];
betaopt = (betas //. param) /. sol[[2]];
rescond = (oc /@ Range[o]) /. Thread[betas -> betaopt];
<|"k" -> k, "order" -> o, "betas" -> betaopt, "orderres" -> rescond, 
 "len" -> -mu[betaopt][-1]|>
\end{verbatim}
%\end{figure*}
%% else use the following coding to input the bibitems directly in the
%% TeX file.

%\begin{thebibliography}{00}

%% \bibitem[Author(year)]{label}
%% Text of bibliographic item

% \bibitem[ ()]{}

% \end{thebibliography}

\end{document}